\newtheorem{theorem}{Theorem}[section]
\newtheorem{proposition}[theorem]{Proposition}
\newtheorem{remark}[theorem]{Remark}
\numberwithin{equation}{section}
\numberwithin{equation}{section}
\newcommand{\RR}{\mathbb R}
\newcommand{\NN}{\mathbb N}
\renewcommand{\leq}{\leqslant}
\renewcommand{\geq}{\geqslant}
\begin{document}

\title[Nonlinear problems on the Sierpi\'nski gasket]{Nonlinear problems on the Sierpi\'nski gasket}

\thanks{The first and the third author were supported by the INdAM-GNAMPA Project 2016 {\it Problemi variazionali su variet\`a riemanniane e gruppi di Carnot}, by the DiSBeF Research Project 2015 {\it Fenomeni non-locali: modelli e applicazioni}, by the DiSPeA Research Project 2016 {\it Implementazione e testing di modelli di fonti energetiche ambientali per reti di sensori senza fili autoalimentate} and by the PRIN 2015 Research Project {\it Variational methods, with applications to problems in mathematical physics and geometry}. The third author was supported by the ERC grant $\epsilon$ ({\it Elliptic Pde's and Symmetry of Interfaces and Layers for Odd Nonlinearities}). The authors were also supported by the SRA grants P1-0292, J1-7025, J1-6721, and J1-5435.}


\author[G. Molica Bisci]{Giovanni Molica Bisci}
\address{Dipartimento PAU,
          Universit\`a `Mediterranea' di Reggio Calabria,
          Via Melissari 24, 89124 Reggio Calabria, Italy}
\email{\tt gmolica@unirc.it}

\author[D. Repov\v{s}]{Du\v{s}an Repov\v{s}}
\address{Faculty of Education and Faculty of Mathematics and Physics,
         University of Ljubljana,
         1000 Ljubljana, Slovenia}
\email{dusan.repovs@guest.arnes.si}

\author[R. Servadei]{Raffaella Servadei}
\address{Dipartimento di Scienze Pure e Applicate (DiSPeA), Universit\`a degli Studi di Urbino
`Carlo Bo', Piazza della Repubblica 13, 61029 Urbino (Pesaro e Urbino), Italy}
\email{\tt raffaella.servadei@uniurb.it}

\keywords{Sierpi\'nski gasket, fractal domains, nonlinear elliptic equation, weak Laplacian.\\
\phantom{aa} 2010 AMS Subject Classification: Primary: 35J20; Secondary: 28A80, 35J25, 35J60,
47J30, 49J52.}


\begin{abstract}
This paper concerns with a class of elliptic equations on fractal domains depending on a real parameter. Our approach is based on variational methods. More precisely, the existence of at least two non-trivial weak (strong) solutions for the treated problem is obtained exploiting a local minimum theorem for differentiable functionals defined on reflexive Banach spaces. A special case of the main result improves a classical application of the Mountain Pass Theorem in the fractal setting, given by Falconer and Hu (1999).
\end{abstract}

\maketitle

\tableofcontents

\section{Introduction}
It is well-known that a great attention has been focused by many authors on the study of elliptic equations on fractal domains and in particular on the Sierpi\'nski gasket. See, among others, the papers \cite{Bre2,Bre1,BRaduV, BRV,Bre3,Bre4, Fa99} and \cite{FaHu, FuSc, ZHe,Hu, MR, stripaper0, stripaper}, as well as the references therein, where the authors obtained several existence and multiplicity results for problems on fractal domains under different growth assumptions on the data.

Motivated by this large interest in the literature, we study here the existence of weak (strong) solutions for the following parametric problem
\begin{equation}\label{Np0}
\left\{
\begin{array}{l}
\Delta u(x)+\alpha(x)u(x)=\lambda f(x,u(x))\quad x \in V\setminus V_0\\
u|_{V_0}=0,\\
\end{array}
\right.
\end{equation}
where $V$ stands for the Sierpi\'nski gasket in $(\RR^{N-1},|\cdot|)$, $N\geq 2$, $V_0$ is its intrinsic boundary (consisting of its $N$ corners), $\Delta$ denotes the weak Laplacian on $V$, $\lambda$ is a positive real parameter and $\alpha$ and $f$ are suitable functions.

The elliptic equation~\eqref{Np0} models some physical phenomena
such as reaction-diffusion problems, elastic properties of fractal media
and flow through fractal non-smooth domains and in all these cases the parameter $\lambda$ has a specific interpretation. When considering problems with parameters the interest is, on one hand, finding solutions, and, on the other hand, studying how these solutions depend on them.

A natural question is whether or not classical existence results for equation~\eqref{Np0} considered in bounded domains (see, for instance, \cite{ar,rabinowitz, struwe} and references therein) still hold in the fractal framework. Our contribution in this direction is stated in the following result:
\begin{theorem}\label{MolicaBisciPrincipal}
Let $\alpha\in L^1(V)$ be a function satisfying either
\begin{equation}\label{alfa1}
\alpha(x)\leq 0\,\,\, \mbox{for a.e.}\,\,\, x\in V
\end{equation}
or
\begin{equation}\label{alfa2}
\int_V|\alpha(x)|d\mu<\frac{1}{(2N+3)^2}
\end{equation}
and let
$f:V\times\RR\rightarrow\RR$ be a continuous function such that
\begin{equation}\label{f0}
f(x,0)\neq 0 \quad \mbox{for any}\,\, x\in V
\end{equation}
and
\begin{equation}\label{f2}
\begin{aligned}
& \qquad \mbox{there are}\,\, \nu>2\,\, \mbox{and}\,\, r_0>0\,\, \mbox{such that}\\
& tf(x,t)\leq \nu F(x,t)<0\,\, \mbox{for any}\,\, |t|\geq r_0,\,\, x\in V,
\end{aligned}
\end{equation}
where $F$ is the potential given by
\begin{equation}\label{F}
{\displaystyle F(x,t):=\int_0^t f(x,s)\,ds}\quad  \mbox{for any}\,\,(x,t)\in V\times\RR\,.
\end{equation}

Then, for any $\varrho>0$ and any
\begin{equation}\label{lambda}
0<\lambda<\frac{\varrho}{2\displaystyle\max_{{\footnotesize\begin{array}{c}
x\in V\\
|s|\leq \kappa\sqrt{\varrho}
\end{array}}}\left|\int_0^{s}f(x,t)dt\right|}\,,
\end{equation}
where
\begin{equation}\label{kappa}
\kappa:=\left\{\begin{array}{ll}
2N+3 & \mbox{if \eqref{alfa1} holds}\\
\\
{\displaystyle \frac{2N+3}{\sqrt{1-(2N+3)^2 {\displaystyle\int_V{|\alpha(x)|}d\mu}}}} & \mbox{if \eqref{alfa2} holds},
\end{array}\right.
\end{equation}
the problem~\eqref{Np0} admits at least two non-trivial weak solutions one of which lies in
$$
\mathbb{B}_\varrho:=\left\{u\in H_0^1(V):\|u\|_\alpha<\sqrt{\varrho}\right\}.
$$
\end{theorem}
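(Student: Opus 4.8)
The plan is to obtain the two nontrivial weak solutions of \eqref{Np0} via a variational scheme, decoupling the problem into two complementary parts: a local minimum inside the ball $\mathbb{B}_\varrho$, and a second critical point of mountain-pass type lying outside it.

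First I would set up the functional framework. Weak solutions of \eqref{Np0} are critical points of the energy functional $\mathcal{J}_\lambda:H^1_0(V)\to\RR$ of the form
\begin{equation*}
\mathcal{J}_\lambda(u)=\Phi(u)-\lambda\Psi(u),\qquad \Phi(u)=\frac{1}{2}\|u\|_\alpha^2,\quad \Psi(u)=\int_V F(x,u(x))\,d\mu,
\end{equation*}
where $\|\cdot\|_\alpha$ is the norm associated with the bilinear form that incorporates the weak Laplacian and the potential $\alpha$. The two hypotheses \eqref{alfa1} and \eqref{alfa2} are precisely what is needed to guarantee that $\|\cdot\|_\alpha$ is an equivalent norm on $H^1_0(V)$ and that $\Phi$ is coercive and (sequentially weakly lower semicontinuous); the constant $(2N+3)$ appearing in $\kappa$ is the embedding constant of $H^1_0(V)$ into $C(V)$, so that $\|u\|_\infty\le\kappa\|u\|_\alpha$ holds with the stated $\kappa$. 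I would verify that $\Phi,\Psi\in C^1$ with compact derivative $\Psi'$, using the compact embedding of $H^1_0(V)$ into $C(V)$ available in the fractal setting, so that $\mathcal{J}_\lambda$ satisfies the Palais--Smale condition.

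For the first solution, the plan is to apply the abstract local minimum theorem for $C^1$ functionals on reflexive Banach spaces mentioned in the abstract. The admissible range of $\lambda$ in \eqref{lambda} is engineered so that, on the sublevel set determined by $\Phi(u)<\varrho$, the term $\lambda\Psi(u)$ is dominated by $\Phi(u)$: using the bound $\|u\|_\infty\le\kappa\sqrt{\varrho}$ for $u$ with $\|u\|_\alpha<\sqrt\varrho$, the quantity $\lambda\,\Psi(u)$ is controlled by $\lambda\,\mu(V)\max_{x,|s|\le\kappa\sqrt\varrho}|\int_0^s f(x,t)\,dt|$, and the strict inequality \eqref{lambda} forces $\mathcal{J}_\lambda$ to attain a local minimum $u_1$ in the interior of $\mathbb{B}_\varrho$. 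Since $f(x,0)\neq 0$ by \eqref{f0}, the origin is not a critical point of $\mathcal{J}_\lambda$, so this local minimizer $u_1$ is nontrivial. The normalization $\mu(V)$ is absorbed into the constants, and I would keep careful track of it so that the threshold in \eqref{lambda} comes out exactly as stated.

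For the second solution, the plan is to invoke the Mountain Pass Theorem applied to $\mathcal{J}_\lambda$ around the local minimum $u_1$. The superquadraticity hypothesis \eqref{f2}, namely $t f(x,t)\le \nu F(x,t)<0$ with $\nu>2$ for $|t|\ge r_0$, is the Ambrosetti--Rabinowitz type condition: integrating it yields a growth estimate showing $F(x,t)\le -c|t|^\nu+C$, so that $\Psi(u)\to-\infty$ fast enough along rays, which produces a point $u_2$ with $\mathcal{J}_\lambda(u_2)<\inf_{\partial\mathbb{B}_\varrho}\mathcal{J}_\lambda$ and a genuine mountain-pass geometry separating $u_1$ from $u_2$. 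Combined with the Palais--Smale condition already established, the mountain pass critical value gives a second critical point, distinct from $u_1$ precisely because it lies at a strictly higher energy level (or outside $\mathbb{B}_\varrho$). I expect the main obstacle to be twofold: first, verifying the Palais--Smale condition under the sign-definite superquadratic condition \eqref{f2} rather than the usual positive one, which reverses some inequalities and requires care that $\Phi(u)-\lambda\Psi(u)$ still coerces bounded Palais--Smale sequences; and second, ensuring the two critical points are genuinely distinct and both nontrivial, which rests on the sharp calibration of the constant $\kappa$ and the parameter range \eqref{lambda} so that the local minimizer sits strictly inside $\mathbb{B}_\varrho$ while the mountain-pass point escapes it.
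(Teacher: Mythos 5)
Your overall strategy coincides with the paper's own proof, essentially unpacked: the paper verifies exactly the ingredients you list (equivalence of the norm $\|\cdot\|_\alpha$ under \eqref{alfa1} or \eqref{alfa2}, the embedding constant $\kappa$ coming from \eqref{stimau}, sequential weak continuity of the potential term via the compact embedding \eqref{si2}, the Palais--Smale property, and the threshold computation behind \eqref{lambda}), and then invokes the abstract result stated as Theorem~\ref{Pucci-Serrin+Ricceri}, which is precisely the packaged form of your two steps --- a Ricceri-type local minimum inside $\Phi^{-1}\big((-\infty,\varrho)\big)$ combined with a Pucci--Serrin (mountain-pass) argument producing the second critical point once the ``strict global minimum'' alternative is excluded. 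The exclusion is carried out in the paper exactly as you propose: \eqref{f2} yields $F(x,t)\leq -b_1|t|^{\nu}+b_2$, hence the energy tends to $-\infty$ along rays.

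However, there is a concrete sign error that, taken literally, breaks your second step. You set $\mathcal{J}_\lambda=\Phi-\lambda\Psi$ with $\Psi(u)=\int_V F(x,u)\,d\mu$. For problem \eqref{Np0} as formulated here, the weak form is $\mathcal{W}(u,v)-\int_V\alpha(x)u(x)v(x)\,d\mu+\lambda\int_V f(x,u(x))v(x)\,d\mu=0$, so the associated energy is
\begin{equation*}
\frac{1}{2\lambda}\,\|u\|_\alpha^2+\int_V F(x,u(x))\,d\mu
\end{equation*}
(this is \eqref{Fu2}): the potential enters with a \emph{plus} sign. Critical points of the functional you wrote down solve \eqref{Np0} with $f$ replaced by $-f$. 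Worse, your geometry becomes internally inconsistent: under \eqref{f2} one has $F(x,t)\leq -b_1|t|^{\nu}+b_2$, so your $\Psi\to-\infty$ along rays --- as you correctly assert --- but then $\Phi-\lambda\Psi\to+\infty$; in fact $\Phi-\lambda\Psi\geq \Phi-\lambda b_2$ is coercive and bounded below, so there is no point $u_2$ with energy below $\inf_{\partial\mathbb{B}_\varrho}\mathcal{J}_\lambda$ and no mountain-pass geometry whatsoever. With the correct sign, $\mathcal{J}_\lambda\to-\infty$ along rays and your argument goes through verbatim; this unboundedness from below is exactly what the paper uses to rule out the global-minimum alternative of Theorem~\ref{Pucci-Serrin+Ricceri}. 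A smaller caveat: compactness of $\Psi'$ alone does not yield the $(\rm PS)$-condition; one must also show that $(\rm PS)$ sequences are bounded, which is where \eqref{f2} enters again --- you flag this as an obstacle but leave it unresolved, whereas the paper settles it by the argument of Falconer--Hu together with Proposition~\ref{PScondition}.
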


Roughly speaking in Theorem~\ref{MolicaBisciPrincipal} we prove that, for small values of the parameter~$\lambda$, problem~\eqref{Np0} admits at least two non-trivial weak solutions, provided that the continuous and nonlinear term $f$ satisfies the celebrated Ambrosetti-Rabinowitz condition without any additional growth assumptions at infinity. A simple model for $f$ is given by the function
\begin{equation}\label{modellof}
f(x,t)=-a(x)\left(t^3+1\right)
\end{equation}
with $a\in C(V)$ and $a>0$ in $V$.

The proof of Theorem~\ref{MolicaBisciPrincipal} is based on variational techniques. This method is not trivial for consideration due to the fact that several difficulties which arise in the new geometrical context given by the Sierpi\'nski gasket have to be overcome. In particular, some analytical properties on the Hilbert space $H^1_0(V)$ need a special care (see Subsection~\ref{subsec:functionalspaces} for the details).
Also, we emphasize that the specific functional setting and techniques involved in handling fractal problems are different in comparison with those considered for classical Dirichlet problems.

Moreover, it is worth pointing out that the variational approach used to attack problems in fractal domains is not often easy to perform. For instance, in this setting there is no concept
of a derivative for a function, and so we need to clarify the notion
of Laplace operator on the fractal region: we would recall that this can be done explicitly only on some special fractals, such as, for instance, the Sierpi\'nski gasket $V$. Once a Laplacian is constructed on $V$, we can use
the Hilbert space $H^1_0(V)$ and its compactness properties in order to study our problem.

More precisely, the proof of Theorem~\ref{MolicaBisciPrincipal} relies on an abstract theorem proved in \cite{R0}, which is a joint application of the classical Pucci-Serrin Theorem (see \cite{puse}) and of a local minimum result obtained in \cite{R2} (see also \cite{An} for related topics).
As described in the forthcoming Subsection~\ref{subsec:variational} and Section~\ref{Section3}, our approach here is based on checking that the energy functional $\mathcal J_\lambda$ associated to problem~\eqref{Np0}
satisfies some geometrical conditions and the classical Palais-Smale property. Thanks to Proposition~\ref{PScondition}, in contrast with the standard elliptic case, the compactness condition for $\mathcal J_\lambda$ is satisfied without recourse to growth assumptions on the nonlinear term $f$.

Finally, it is interesting to note that in our approach the behavior of the nonlinearity $f$ at the origin is weaker than the one usually considered in the classical elliptic case and so Theorem~\ref{MolicaBisciPrincipal} improves the
paradigmatic application of the Mountain Pass Theorem for elliptic partial
differential equations on smooth domains given in \cite[Theorem~6.2]{struwe} (see also~\cite{ar,rabinowitz}). Theorem~\ref{MolicaBisciPrincipal} can be seen as the fractal counterpart of \cite[Theorem~4]{R0}, where the author studied the existence of solutions for an elliptic PDE, under growth conditions weaker than the usual ones (of superlinear and subcritical type).

A special case of Theorem~\ref{MolicaBisciPrincipal} reads as follows:
\begin{theorem}\label{MolicaBisciSpecial}
Let $\alpha\in C(V)$ satisfy \eqref{alfa1} and let
$f:V\times\RR\rightarrow\RR$ be a continuous function satisfying \eqref{f0}, \eqref{f2} and such that
\begin{equation}\label{f1}
\begin{aligned}
& \mbox{there are positive constants}\,\,\, M_0\,\, \mbox{and}\,\, \beta\,\, \mbox{such that}\\
& \displaystyle\max_{(x,s)\in V\times[-M_0,M_0]}|f(x,s)|\leq \frac{M_0}{2(\beta+1)(2N+3)^2}\,.
\end{aligned}
\end{equation}

Then, the following problem
\begin{equation}\label{Np}
\left\{
\begin{array}{l}
\Delta u(x)+\alpha(x)u(x)=f(x,u(x))\quad x \in V\setminus V_0\\
u|_{V_0}=0\\
\end{array}
\right.
\end{equation}
admits at least two strong non-trivial solutions one of which lies in $\mathbb{B}_{M_0^2/(2N+3)^2}$.
\end{theorem}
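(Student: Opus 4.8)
The plan is to derive Theorem~\ref{MolicaBisciSpecial} directly from Theorem~\ref{MolicaBisciPrincipal} by making a judicious choice of the parameter $\varrho$ and then checking that $\lambda=1$ falls in the admissible range \eqref{lambda}. First I would set $\varrho:=M_0^2/(2N+3)^2$. Since $\alpha\in C(V)\subset L^1(V)$ and $\alpha$ satisfies \eqref{alfa1}, the constant $\kappa$ defined in \eqref{kappa} is exactly $2N+3$, so that $\kappa\sqrt{\varrho}=M_0$. Consequently the maximum appearing in the denominator of \eqref{lambda} is taken over $x\in V$ and $|s|\leq M_0$, which is precisely the range on which \eqref{f1} gives control of $f$.

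Next I would estimate that maximum using \eqref{f1}: for every $x\in V$ and every $|s|\leq M_0$,
\[
\left|\int_0^{s} f(x,t)\,dt\right|\leq |s|\max_{(x,t)\in V\times[-M_0,M_0]}|f(x,t)|\leq M_0\cdot\frac{M_0}{2(\beta+1)(2N+3)^2}.
\]
Plugging this bound into \eqref{lambda} shows that the admissible interval for $\lambda$ contains $(0,\beta+1)$, and since $\beta>0$ we have $1\in(0,\beta+1)$. Thus Theorem~\ref{MolicaBisciPrincipal} applies with this choice of $\varrho$ and with $\lambda=1$: hypotheses \eqref{f0} and \eqref{f2} are carried over verbatim, and it yields two non-trivial weak solutions of \eqref{Np}, one of which lies in $\mathbb{B}_{M_0^2/(2N+3)^2}$.

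Finally I would upgrade these weak solutions to strong ones, which is where the strengthened hypothesis $\alpha\in C(V)$ (rather than merely $\alpha\in L^1(V)$) is used: if $u\in H^1_0(V)$ is a weak solution of \eqref{Np}, then by the embedding of $H^1_0(V)$ into $C(V)$ recalled in Subsection~\ref{subsec:functionalspaces} the function $u$ is continuous on $V$, hence so is $x\mapsto\alpha(x)u(x)-f(x,u(x))$, and this continuity of the right-hand side is exactly what is needed to conclude that $u$ is a strong solution of \eqref{Np}. I expect this last passage, together with the bookkeeping of the numerical constants, to be the only genuinely non-formal part of the argument; the substantive analytical work—establishing the geometric conditions and the Palais--Smale property for the energy functional $\mathcal J_\lambda$—has already been carried out in the proof of Theorem~\ref{MolicaBisciPrincipal}, so no new obstacle arises here.
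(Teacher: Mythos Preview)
Your proposal is correct and follows essentially the same route as the paper: choose $\varrho=M_0^2/(2N+3)^2$ so that $\kappa\sqrt\varrho=M_0$, use \eqref{f1} to bound the denominator in \eqref{lambda} and conclude that $\lambda=1$ is admissible, then invoke Theorem~\ref{MolicaBisciPrincipal} and finally upgrade the weak solutions to strong ones via the continuity of $\alpha$ and $f$ (the paper packages this last step as Remark~\ref{strong}, citing \cite[Lemma~2.16]{FaHu}). The only cosmetic difference is that the paper phrases the numerical check through the quantity $\lambda^*$ of Remark~\ref{notaMorrey} rather than fixing $\varrho$ explicitly, but the underlying computation is identical.
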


It is easily seen that a similar result can be obtained under the assumption \eqref{alfa2} in the weight $\alpha$.
We notice that Theorem~\ref{MolicaBisciSpecial} improves the conclusions of \cite[Theorem 3.5]{FaHu}, where, under hypotheses \eqref{f2} and \eqref{f1}, the authors proved just the existence of at least one (non-trivial) strong solution for problem~\eqref{Np} by employing the Mountain
Pass Theorem. Moreover, in the same result no explicit information about the localization of the solution is provided. Finally, we observe that the function $f$ given in \eqref{modellof} is a prototype for Theorem~\ref{MolicaBisciSpecial}.

The plan of the present paper is as follows. Section~\ref{section2} is devoted to the abstract framework and some preliminaries. In Section~\ref{sec:weakstrong} we give the notion of weak and strong solution for problem~\eqref{Np0} and its variational formulation. Later, in Section~\ref{Section3} we prove Theorem~\ref{MolicaBisciPrincipal} and Theorem~\ref{MolicaBisciSpecial} and we give some final comments.

\section{Abstract framework}\label{section2}

In this section we briefly recall some basic facts on the {Sierpi\'nski gasket} $V$ and the functional space $H_0^1(V)$ firstly introduced in \cite{FaHu} (see also \cite{Bre2, Bre1, BRaduV, BRV, Bre3, Bre4, MR}).

\subsection{The Sierpi\'nski gasket $V$}\label{subsec: sierpinski}
Let $N\geq2$ be a natural number and
let $p_1,\dots, p_N\in\RR^{N-1}$ be so that $|p_i-p_j|=1$ for $i\neq j$. Define, for every
$i\in\{1,\dots,N\}$, the map $S_i\colon\RR^{N-1}\to\RR^{N-1}$ by
$$S_i(x)=\frac12\,x+\frac12\,p_i\,.$$
 Let
${\mathcal S}:=\{S_1,\dots, S_N\}$ and denote by $L\colon{\mathcal
P}(\RR^{N-1})\to{\mathcal P}(\RR^{N-1})$ the map assigning to a subset
$A$ of $\RR^{N-1}$ the set
\begin{equation}\label{definizioneL}
L(A)=\bigcup_{i=1}^NS_i(A).
\end{equation}

It is well known that there
is a unique non-empty compact subset $V$ of $\RR^{N-1}$, called the
\emph{attractor} of the family ${\mathcal S}$, such that $L(V)=V$ (see \cite[Theorem~9.1]{Fa}). The set $V$ is called
the \emph{Sierpi\'nski gasket} in $\RR^{N-1}$ of \emph{intrinsic boundary} $V_0:=\{p_1,\dots,p_N\}$.\par

Let $\mu$ be the restriction to $V$ of the normalized $\log N/\log 2$-dimensional Hausdorff measure ${\mathcal
H}^d$ on $\RR^{N-1}$, so that $\mu(V)=1$ (see, for instance, Breckner, R\u adulescu and Varga \cite{BRaduV} for more details).
Finally, we also recall the following property of $\mu$ which will be useful in the sequel:
\begin{equation}\label{support}
\mu(B)>0,\hbox{ for every non-empty open subset $B$ of }V.
\end{equation}
\indent For a nice and interesting introduction to fractal geometry we refer to the monograph~\cite{Fa}.

\subsection{Functional spaces on $V$}\label{subsec:functionalspaces}
In what follows we denote by $C(V)$ the space of real-valued continuous functions on $V$ and by
$$C_0(V):=\Big\{u\in C(V): u|_{V_0}=0\Big\}.$$
The spaces $C(V)$ and $C_0(V)$ are endowed with the usual
norm $\|\cdot\|_\infty$.

For any function $u:V_*\to\RR$ and $m\in \NN$, let
\begin{equation}\label{defWm}
W_m(u):=\left(\frac{N+2}{N}\right)^m\sum_{\underset{|x-y|=2^{-m}}{x,y\in
V_m}}(u(x)-u(y))^2,
\end{equation}
where $V_m:=L(V_{m-1})$, $L$ is as in \eqref{definizioneL} and ${\displaystyle V_*:=\bigcup_{m\in \NN_0}V_m}$.

Since $W_m(u)\leq W_{m+1}(u)$ for any $m\in \NN$, we can construct the function $W(u)$ as follows
\begin{equation}\label{defW}
W(u):=\lim_{m\to\infty} W_m(u).
\end{equation}

Now, let $H_0^1(V)$ be the space given by
$$H_0^1(V):=\Big\{u\in C_0(V): W(u)<\infty\Big\}$$
equipped with the norm
\begin{equation}\label{norma}
\|u\|:=\sqrt{W(u)}.
\end{equation}

We conclude this subsection dealing with the compactness properties of $H_0^1(V)$.
With this respect, in our setting a key ingredient is given by the following Morrey-type inequality (see \cite[Lemma 2.4]{FaHu} for details)
\begin{equation}\label{si}
\displaystyle\sup_{x,y\in V_*}\frac{|u(x)-u(y)|}{|x-y|^{\sigma}}\leq (2N+3)\sqrt{W(u)},
\end{equation}
where
$$
\sigma:=\displaystyle\frac{\log((N+2)/N)}{2\log 2}\,.
$$
We would stress that the validity of inequality~\eqref{si} is due to the peculiar geometry of the Sierpi\'nski gasket $V$.

The Ascoli-Arz\'ela Theorem and \eqref{si} yield that the embedding
\begin{equation}\label{si2}
H_0^1(V)\hookrightarrow C_0(V)
\end{equation}
is compact (see \cite{FuSc}). Moreover, we get the following estimate:
\begin{equation}\label{stimau}
|u(x)|\leq (2N+3)\|u\|\,\,\, \mbox{for any}\,\, x\in V\,.
\end{equation}
Inequality~\eqref{stimau} will be crucial in the proof of Theorem~\ref{MolicaBisciPrincipal}, as we will see in the sequel (see Remark~\ref{notaMorrey}).

\section{Weak and strong solutions of problem~\eqref{Np0}}\label{sec:weakstrong}
In this section we give the notion of weak and strong solution for problem~\eqref{Np0} and we deal with its variational nature.
At this purpose we first give the notion of the Laplace operator on the Sierpi\'nski gasket $V$.

Following Falconer and Hu \cite{FaHu} we can define in a standard way a linear self-adjoint operator $\Delta: H^1_0(V)\to H^{-1}(V)$, where $H^{-1}(V)$ is the closure of
$L^2(V,\mu)$ with respect to the pre-norm
$$
\|u\|_{H^{-1}(V)}:=\sup_{\underset{\|h\|=1}{h\in H^1_0(V)}} |\langle u,h\rangle|,
$$
and
$$
\langle v,h\rangle=\int_Vv(x)h(x)d\mu\quad \mbox{for any}\,\,v\in L^2(V,\mu)\,\,\mbox{and}\,\, h\in H^1_0(V).
$$
\indent Note that $H^{-1}(V)$ is a Hilbert space. Then, the relation
$$
-{\mathcal W}(u,v)=\langle\Delta u,v\rangle\quad \mbox{for any}\,\, v\in H^1_0(V),
$$
where
 $$
 {\mathcal W}(u,v):=\lim_{m\rightarrow \infty}\left(\frac{N+2}{N}\right)^m\sum_{\underset{|x-y|=2^{-m}}{x,y\in V_m}}(u(x)-u(y))(v(x)-v(y))
 $$
 denotes the inner product in $H^1_0(V)$,
uniquely defines a function $\Delta u\in H^{-1}(V)$ for every $u\in
H^1_0(V)$. We call the operator $\Delta$ the \emph{weak Laplacian} on $V$.\par

Now, we can give the notion of weak solution for problem~\eqref{Np0}.
We say that a function $u\in H_0^1(V)$ is a \emph{weak solution} of problem~\eqref{Np0} if
$${\mathcal W}(u,v)-\int_V\alpha(x)u(x)v(x)d\mu+\lambda\int_V f(x,u(x))v(x)d\mu=0$$
for any $v\in H_0^1(V)$.

\begin{remark}\label{strong}
\rm{Note that if the standard Laplacian of a function $u\in H^1_0(V)$ exists, then this implies
the existence of the weak Laplacian of $u$ (see, for the sake of completeness, the paper \cite{FaHu}).
Furthermore, if $f$ and $\alpha$ are continuous, then \cite[Lemma~2.16]{FaHu} yields that every weak solution of problem~\eqref{Np0} is also a \textit{strong} solution of it (see \cite[Section 2 and Proposition 2.12]{FaHu}).}
\end{remark}

\subsection{Variational framework of the problem}\label{subsec:variational}
Problem~\eqref{Np0} is of variational nature, indeed the natural energy functional associated with it is given by $\mathcal J_\lambda:H^1_0(V)\rightarrow\RR$ defined as
\begin{equation}\label{Fu2}
\begin{aligned}
\mathcal J_\lambda(u)& :=\frac{W(u)}{2\lambda}-\frac{1}{2\lambda}\int_V {\alpha(x)}|u(x)|^{2}\,d\mu
+\int_V F(x, u(x))d\mu\,.
\end{aligned}
\end{equation}
\indent Note that the functional $\mathcal{J}_{\lambda}$ is continuously G\^{a}teaux differentiable at $u\in H_0^1(V)$ and one has
$$
\langle \mathcal{J}'_{\lambda}(u), v\rangle = \frac{{\mathcal W}(u,v)}{\lambda}-\frac{1}{\lambda}\int_V\alpha(x)u(x)v(x)d\mu
+\int_V f(x,u(x))v(x)d\mu
$$
for any $v \in H^1_0(V)$, thanks to \cite[Proposition~2.19]{FaHu}.\par

 Thus, the critical points of $\mathcal{J}_{\lambda}$ are exactly the weak solutions of problem~\eqref{Np0}.

We recall that a $C^1$-functional $J:E\to\RR$, where $E$ is a real Banach
space with topological dual $E^*$, satisfies the \emph{Palais-Smale condition} (in short $(\rm PS)$-condition) when
$$\begin{aligned}
& \quad \emph{every sequence $\{u_j\}_{j\in \NN}$ in $E$ such that
$\{J(u_j)\}_{j\in\NN}$ is bounded and}\\
& \emph{$\|J'(u_j)\|_{E^*}\to 0$ as $j\rightarrow +\infty$
possesses a convergent subsequence in $E$.}
\end{aligned}$$

The abstract tool used along the present paper in order to prove the existence of weak solutions for \eqref{Np0} is the following theorem (see \cite[Theorem~6]{R0}):
\begin{theorem}\label{Pucci-Serrin+Ricceri}
Let $E$ be a reflexive real Banach space and let $\Phi,\Psi:E\to\RR$
be two continuously G\^{a}teaux differentiable functionals such that
\begin{itemize}
\item $\Phi$ is
sequentially weakly lower semicontinuous and coercive in $E$
\item $\Psi$
is sequentially weakly continuous in $E$.
\end{itemize}
In addition, assume that for each $\mu>0$ the functional $J_\mu:=\mu\Phi-\Psi$ satisfies the $(\rm PS)$-condition. Then, for each $\varrho>\displaystyle\inf_E\Phi$ and each
$$\mu>\inf_{u\in\Phi^{-1}\big((-\infty,\varrho)\big)}
\frac{\displaystyle\sup_{v\in\Phi^{-1}\big((-\infty,\varrho)\big)}\Psi(v)-\Psi(u)}{\varrho-\Phi(u)}\,\,,$$
the following alternative holds: either the functional $J_\mu$ has a strict global minimum which lies in $\Phi^{-1}\big((-\infty,\varrho))$, or $J_\mu$ has at least two critical points one of which lies in $\Phi^{-1}\big((-\infty,\varrho))$.
\end{theorem}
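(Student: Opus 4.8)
The plan is to obtain the two alternatives by combining two independent tools matching the hypotheses: a local minimum principle for the sublevel sets of $\Phi$, in the spirit of \cite{R2}, which produces one critical point inside $\Phi^{-1}((-\infty,\varrho))$, and the Pucci-Serrin theorem \cite{puse}, which, once a local minimum of a $(\mathrm{PS})$-functional is available, forces either strict global minimality or the presence of a further critical point.

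First I would produce the local minimum. Since $E$ is reflexive and $\Phi$ is coercive and sequentially weakly lower semicontinuous, the sublevel set $\{u\in E:\Phi(u)\leq\varrho\}$ is bounded, sequentially weakly closed, and hence sequentially weakly compact. On this set the functional $J_\mu=\mu\Phi-\Psi$ is sequentially weakly lower semicontinuous, being the sum of the weakly lower semicontinuous map $\mu\Phi$ and of $-\Psi$ with $\Psi$ weakly continuous; therefore $J_\mu$ attains its infimum over $\{\Phi\leq\varrho\}$ at some point $u_0$. The role of the quantitative bound on $\mu$ is to locate $u_0$ strictly inside the sublevel set: exploiting the inequality $\mu>\inf_{u}\big(\sup_v\Psi(v)-\Psi(u)\big)/(\varrho-\Phi(u))$, one selects an admissible $\bar u\in\Phi^{-1}((-\infty,\varrho))$ and compares $J_\mu(u_0)\leq J_\mu(\bar u)$ with the value of $J_\mu$ on the level $\{\Phi=\varrho\}$; the strict inequality defining the threshold rules out $\Phi(u_0)=\varrho$ and yields $\Phi(u_0)<\varrho$. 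Since $\Phi$ is continuous, $\Phi^{-1}((-\infty,\varrho))$ is open, so $u_0$ has a norm neighbourhood contained in $\{\Phi<\varrho\}$, and the minimality of $J_\mu(u_0)$ over $\{\Phi\leq\varrho\}$ upgrades to a genuine local minimum of $J_\mu$ on all of $E$. In particular $u_0$ is a critical point of $J_\mu$ lying in $\Phi^{-1}((-\infty,\varrho))$.

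The dichotomy then follows from the Pucci-Serrin theorem applied to $J_\mu$, which is $C^1$ and satisfies the $(\mathrm{PS})$-condition by hypothesis. If $u_0$ is a strict global minimum of $J_\mu$, then, being located in $\Phi^{-1}((-\infty,\varrho))$, it realizes the first alternative. Otherwise $u_0$ is a local minimum that is not a strict global minimum, so there exists $w\neq u_0$ at level $J_\mu(w)\leq J_\mu(u_0)$; invoking \cite{puse} for the $(\mathrm{PS})$-functional $J_\mu$ possessing the local minimum $u_0$ then provides a second critical point $u_1\neq u_0$, either a further minimizer or a mountain-pass type saddle. In this case $J_\mu$ has at least two distinct critical points, one of which, namely $u_0$, lies in $\Phi^{-1}((-\infty,\varrho))$, which is exactly the second alternative.

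The main obstacle I expect is the interior-localization step: turning the abstract threshold on $\mu$ into the strict inequality $\Phi(u_0)<\varrho$ requires handling the behaviour of $\Psi$ on the boundary level $\{\Phi=\varrho\}$, where the supremum entering the threshold is taken over the open sublevel set; a careful comparison, or an approximation of boundary points by interior ones exploiting the weak continuity of $\Psi$, is needed to exclude the degenerate case $\Phi(u_0)=\varrho$. The subsequent application of the Pucci-Serrin theorem is comparatively routine once the $(\mathrm{PS})$-condition and the local minimum are in hand, the only care being to guarantee that the second critical point is genuinely distinct from $u_0$ when the global minimum fails to be strict.
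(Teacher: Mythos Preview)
Your proposal is correct and follows exactly the route the paper indicates: the paper does not give a detailed proof of this theorem but merely cites \cite{R0} and remarks that it ``comes out from a joint application of the classical Pucci-Serrin Theorem (see \cite{puse}) and a local minimum result due to Ricceri (see \cite{R2}).'' Your two-step scheme---Ricceri's variational principle to produce a local minimizer of $J_\mu$ in the open sublevel set $\Phi^{-1}((-\infty,\varrho))$, then the Pucci-Serrin dichotomy for a $(\mathrm{PS})$-functional admitting a local minimum---is precisely this combination, and your identification of the interior-localization step as the delicate point is accurate.
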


Theorem~\ref{Pucci-Serrin+Ricceri} comes out from a joint application of the classical Pucci-Serrin Theorem (see \cite{puse}) and a local minimum result due to Ricceri (see \cite{R2}). We refer the interested reader to \cite{BoMoRa0,BoMoRa1,MR,Ri1,R3} and references therein for some applications of Ricceri's variational principle and to \cite{k2} for related topics on the variational methods used in this paper (see also the classical reference~\cite{brezis}).

The $(\rm PS)$-condition is one of the main compactness assumption required on the energy functional when considering critical point theorem. In order to simplify its proof, in the sequel we will perform the following result, which is valid for the functional~$\mathcal J_\lambda$ given in \eqref{Fu2}:
\begin{proposition}\label{PScondition}
Let $f\in C(V\times\RR)$ and $\alpha\in L^1(V)$ and let $\mathcal J_\lambda$ be the energy functional
defined in \eqref{Fu2}.
If the sequence $\{u_j\}_{j\in \NN}$ is bounded in $H^1_0(V)$ and
$$
\|\mathcal{J}_{\lambda}'(u_j)\|_{H^{-1}(V)}\to0\quad \mbox{as}\,\,j\rightarrow +\infty\,,
$$
then $\{u_j\}_{j\in \NN}$ has a Cauchy subsequence in $H^1_0(V)$ and so $\{u_j\}_{j\in \NN}$ has a convergent subsequence.
\end{proposition}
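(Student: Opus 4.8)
The plan is to prove the Palais--Smale-type statement by exploiting the compact embedding $H_0^1(V) \hookrightarrow C_0(V)$ from \eqref{si2}. First I would observe that since $\{u_j\}_{j \in \NN}$ is bounded in the Hilbert space $H_0^1(V)$, after passing to a subsequence (not relabeled) we may assume $u_j \rightharpoonup u$ weakly in $H_0^1(V)$ for some $u \in H_0^1(V)$. By the compactness of the embedding \eqref{si2}, this same subsequence converges strongly in $C_0(V)$, that is, $\|u_j - u\|_\infty \to 0$. In particular $\{u_j\}$ is uniformly bounded in the sup-norm, say $\|u_j\|_\infty \le R$ for all $j$, which will be used to control the nonlinear terms on a compact set.

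Next I would use the identity for $\langle \mathcal J_\lambda'(u_j), v\rangle$ with the test function $v = u_j - u$. Writing this out,
\[
\frac{\mathcal W(u_j, u_j - u)}{\lambda} = \langle \mathcal J_\lambda'(u_j), u_j - u\rangle + \frac{1}{\lambda}\int_V \alpha(x) u_j(x)(u_j(x) - u(x))\, d\mu - \int_V f(x, u_j(x))(u_j(x) - u(x))\, d\mu.
\]
The first term on the right is bounded in absolute value by $\|\mathcal J_\lambda'(u_j)\|_{H^{-1}(V)} \|u_j - u\|$, and since $\|u_j - u\|$ is bounded and $\|\mathcal J_\lambda'(u_j)\|_{H^{-1}(V)} \to 0$, this term vanishes as $j \to \infty$. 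For the $\alpha$-term, I would bound it by $\frac{1}{\lambda}\|u_j - u\|_\infty (\|u_j\|_\infty + \ldots)$ — actually more carefully, $\left|\int_V \alpha(x) u_j(x)(u_j(x)-u(x))\,d\mu\right| \le \frac{R}{\lambda}\|\alpha\|_{L^1(V)}\|u_j - u\|_\infty \to 0$ using $\alpha \in L^1(V)$ and the uniform sup-bound. For the $f$-term, since $f$ is continuous it is bounded on the compact set $V \times [-R, R]$, say by $C_R$, hence $\left|\int_V f(x,u_j(x))(u_j(x)-u(x))\,d\mu\right| \le C_R \|u_j - u\|_\infty \to 0$. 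Therefore $\mathcal W(u_j, u_j - u) \to 0$.

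Finally I would conclude the Cauchy/convergence claim. Since $u_j \rightharpoonup u$ in $H_0^1(V)$, we also have $\mathcal W(u, u_j - u) \to 0$ by definition of weak convergence (testing the weak limit against the bilinear form with the fixed element $u$). Subtracting, $\mathcal W(u_j - u, u_j - u) = \|u_j - u\|^2 \to 0$, so $u_j \to u$ strongly in $H_0^1(V)$. This shows the extracted subsequence is convergent (in particular Cauchy) in $H_0^1(V)$, as required. The main obstacle — really the only delicate point — is making sure the nonlinear and weight integrals are genuinely controlled by the sup-norm convergence: this is exactly where the compact embedding \eqref{si2} together with $\alpha \in L^1(V)$ and the continuity of $f$ do all the work, and it is the feature the authors highlight as replacing the growth hypotheses needed in the classical elliptic setting. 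No growth restriction on $f$ at infinity is invoked anywhere, because the $C_0(V)$-convergence confines everything to a fixed compact range of the second variable.
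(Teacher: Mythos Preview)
Your argument is correct and is the standard route for this kind of Palais--Smale-type statement: extract a weakly convergent subsequence, upgrade to uniform convergence via the compact embedding \eqref{si2}, test the derivative against $u_j-u$, and use $\alpha\in L^1(V)$ together with the continuity of $f$ on the compact set $V\times[-R,R]$ to kill the lower-order terms. The only thing to note is that the paper does not actually prove this proposition at all; it simply refers to \cite[Proposition~2.24]{FaHu}. Your write-up is a self-contained version of (essentially) that cited argument.
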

\begin{proof}
See \cite[Proposition~2.24]{FaHu} for a detailed proof.
\end{proof}

\section{Main results of the paper}\label{Section3}
The aim of this section is to prove that, under natural assumptions on the nonlinear term~$f$, problem~\eqref{Np0} admits two non-trivial solutions. As we already said, this is done by means of variational techniques.

Before proving Theorem~\ref{MolicaBisciPrincipal} it will be useful to define another norm on $H^1_0(V)$
as follows:
\begin{equation}\label{eqnorm}
\|u\|_{\alpha}:=\sqrt{\displaystyle W(u)-\int_V{\alpha(x)}|u(x)|^{2}d\mu},
\end{equation}
where $\alpha$ is the function satisfying the assumptions stated in Theorem~\ref{MolicaBisciPrincipal} and $W$ is defined in \eqref{defW}.
It is easy to see that $\|\cdot\|_\alpha$ is a norm on $H^1_0(V)$ equivalent to the usual one given in \eqref{norma}.

Indeed, if $\alpha$ satisfies condition~\eqref{alfa1} we have that
\begin{equation}\label{normaalfa1}
\begin{aligned}
\|u\|_{\alpha}^2=\displaystyle W(u)-\int_V{\alpha(x)}|u(x)|^{2}d\mu\geq W(u)=\|u\|^2,
\end{aligned}
\end{equation}
and, by \eqref{stimau}, we get
$$\begin{aligned}
\|u\|_{\alpha}^2 & =\displaystyle W(u)-\int_V{\alpha(x)}|u(x)|^{2}d\mu\\
& \leq \displaystyle W(u)-(2N+3)^2\|u\|^2\int_V{\alpha(x)}d\mu\\
& = \Big(1+(2N+3)^2 \int_V{|\alpha(x)|}d\mu\Big)\|u\|^2\,.
\end{aligned}$$

On the other hand, if $\alpha$ verifies condition~\eqref{alfa2} we have that
\begin{equation}\label{normaalfa2}
\begin{aligned}
\|u\|_{\alpha}^2 & =\displaystyle W(u)-\int_V{\alpha(x)}|u(x)|^{2}d\mu\\
& \geq \displaystyle W(u)-\int_V{|\alpha(x)|}|u(x)|^{2}d\mu\\
& \geq \displaystyle W(u)-(2N+3)^2\|u\|^2\int_V|\alpha(x)|d\mu\\
& = \Big(1-(2N+3)^2 \int_V{|\alpha(x)|}d\mu\Big)\|u\|^2
\end{aligned}
\end{equation}
and
$$\begin{aligned}
\|u\|_{\alpha}^2 & =\displaystyle W(u)-\int_V{\alpha(x)}|u(x)|^{2}d\mu\\
& \leq \displaystyle W(u)+\int_V{|\alpha(x)|}|u(x)|^{2}d\mu\\
& \leq \displaystyle W(u)+(2N+3)^2\|u\|^2\int_V{|\alpha(x)|}d\mu\\
& \leq 2\|u\|^2\,,
\end{aligned}$$
thanks to \eqref{stimau}.

Now we can prove our main results.
\subsection{Proof of Theorem~\ref{MolicaBisciPrincipal}}\label{subsec:proofthP}
The idea of the proof consists in applying Theorem~\ref{Pucci-Serrin+Ricceri} to the functional
$$\mathcal J_\lambda(u)=\frac{1}{2\lambda}\Phi(u)-\Psi(u)\,,$$
where
$$
\Phi(u):=\|u\|_{\alpha}^2,
$$
as well as
$$
\Psi(u):=-\displaystyle\int_V F(x,u(x))d\mu,
$$
for any $u\in H^1_0(V)$. Note that here we perform Theorem~\ref{Pucci-Serrin+Ricceri} taking the parameter ${\displaystyle\mu=\frac{1}{2\lambda}}$.

First of all, let us consider the regularity assumptions required on $\Phi$ and $\Psi$. It is easy to see that $\Phi$ is sequentially weakly lower semicontinuous and coercive in $H^1_0(V)$.

Now, let us prove that $\Psi$ is sequentially weakly continuous in $H^1_0(V)$. At this purpose, let $\{u_j\}_{j\in \NN}$ be a sequence in $H^1_0(V)$ such that
$$u_j \to u\quad \mbox{weakly in}\,\, H^1_0(V)$$
as $j\to +\infty$, for some $u\in H^1_0(V)$. Then, by \eqref{si2}, we get that
$$u_j \to u\quad \mbox{in}\,\, C_0(V)\,,$$
that is
\begin{equation}\label{normalinfty}
\|u_j - u\|_\infty\to 0
\end{equation}
as $j\to +\infty$.
As a consequence of \eqref{normalinfty} we get that there exists a positive constant $K$ such that
\begin{equation}\label{norma<K}
\|u_j\|_\infty\leq K \quad \mbox{and} \quad \|u\|_\infty\leq K\,\,\, \mbox{for any}\,\, j\in \NN\,.
\end{equation}

Hence, we deduce that
\begin{equation}\label{psiweakly}
\begin{aligned}
\Big|\Psi(u_j)-\Psi(u)\Big| & =\Big|\int_V F(x, u_j(x))d\mu- \int_V F(x, u(x))d\mu\Big|\\
& \leq  \int_V \Big|F(x, u_j(x))- F(x, u(x))\Big|d\mu\\
& = \int_V \Big|\int_{u_j(x)}^{u(x)}f(x,t)dt\Big|d\mu\\
& \leq \int_V \Big|\int_{u_j(x)}^{u(x)}|f(x,t)|dt\Big|d\mu\\
& \leq \int_V \Big|\int_{u_j(x)}^{u(x)}{\displaystyle \max_{|t|\leq K}|f(x,t)|}dt\Big|d\mu\\
& = \int_V {\displaystyle \max_{|t|\leq K}|f(x,t)|}\,\,|u_j(x) - u(x)| d\mu\\
& \leq {\displaystyle \max_{x\in V,\, |t|\leq K}|f(x,t)|}\,\,\|u_j - u\|_\infty\,,
\end{aligned}
\end{equation}
since \eqref{norma<K} holds true, $f$ is continuous in $V\times \mathbb R$ and $V$ is compact with $\mu(V)=1$\,. By \eqref{normalinfty} and \eqref{psiweakly} we obtain that
$$\Big|\Psi(u_j)-\Psi(u)\Big|\to 0$$
as $j\to +\infty$, so that $\Psi$ is sequentially weakly continuous in $H^1_0(V)$.

Now, we observe that
\begin{equation}\label{junbounded}
\mbox{the functional $\mathcal J_\lambda$ is unbounded from below in $H^1_0(V)$\,.}
\end{equation}
Indeed, assumption~\eqref{f2} implies that there exist two positive constants $b_1$ and $b_2$ such that
\begin{equation}\label{inequal}
F(x,t)\leq -b_1|t|^{\nu}+b_2\quad \mbox{for any}\,\, x\in V\, \mbox{and}\,\, t\in \RR.
\end{equation}
Thus, by \eqref{inequal} and the fact that $\mu (V)=1$,  for any $u\in H^1_0(V)$ one has
\begin{equation}\label{inequal2}
\int_VF(x,u(x))d\mu\leq -b_1\int_V|u(x)|^{\nu}d\mu+b_2\,.
\end{equation}
Let $u_0\in H^1_0(V)$ with $\|u_0\|_\alpha=1$. Then, by \eqref{inequal2} we have that
$$\begin{aligned}
\mathcal J_\lambda(t u_0) &= \frac{t^2}{2\lambda}+\int_V F(x,tu_0(x))d\mu\\
&\leq \frac{t^2}{2\lambda}-b_1|t|^{\nu}\int_V|u_0(x)|^{\nu}d\mu+b_2\\
&\rightarrow -\infty,
\end{aligned}$$
as $t\rightarrow +\infty$, since $\nu>2$ by assumption~\eqref{f2} and ${\displaystyle \int_V|u_0(x)|^{\nu}d\mu>0}$\,. This concludes the proof of \eqref{junbounded}.

Now, it remains to prove that the functional $\mathcal J_\lambda$ verifies the $(\rm PS)$-condition. To this goal, it is enough to argue as in \cite[Theorem 3.5]{FaHu} and to use Proposition~\ref{PScondition}.

Finally, let $\varrho>0$ and
$$\chi(\varrho):=\displaystyle\inf_{u\in \mathbb{B}_{\varrho}}\frac{\displaystyle\sup_{v\in \mathbb{B}_{\varrho}}\Psi(v) -\Psi(u)}{\varrho-\|u\|_{\alpha}^2}\,,$$
where
$$\mathbb{B}_{\varrho}=\Big\{v\in H^1_0(V) : \|v\|_\alpha<\sqrt \varrho\Big\}\,.$$

The definition of $\chi$ yields that for every $u\in \mathbb{B}_{\varrho}$
$$\chi(\varrho) \leq \frac{\displaystyle\sup_{v\in \mathbb{B}_{\varrho}}\Psi(v) -\Psi(u)}{\varrho-\|u\|_{\alpha}^2}$$
so that, using the fact that $0\in \mathbb{B}_\varrho$, we obtain that
\begin{equation}\label{Funzionale1}
\begin{aligned}
\chi(\varrho)
& \leq \frac 1 \varrho\displaystyle\sup_{v\in \mathbb{B}_{\varrho}}\Psi(v)\\
& \leq \frac 1 \varrho \displaystyle\sup_{v\in \overline{\mathbb{B}}_\varrho}\left|\int_V F(x,v(x))d\mu\right|\\
& \leq \frac 1 \varrho \displaystyle\sup_{v\in \overline{\mathbb{B}}_\varrho}\int_V \left|F(x,v(x))\right| d\mu.
\end{aligned}
\end{equation}

Now, assume that the function $\alpha$ satisfies assumption~\eqref{alfa1}. Then, if $v\in \overline{\mathbb{B}}_\varrho$, by \eqref{stimau} and \eqref{normaalfa1} we get that
\begin{equation}\label{valfa1}
|v(x)|\leq (2N+3)\|v\|\leq (2N+3)\|v\|_\alpha\leq (2N+3)\sqrt{\varrho} \quad \mbox{for any}\,\, x\in V\,,
\end{equation}
which combined with the continuity of $F$ and the compactness of $V$ gives for any $x\in V$
\begin{equation}\label{Funzionale2F}
\left|F(x,v(x))\right|\leq \displaystyle\max_{{\footnotesize\begin{array}{c}
y\in V\\
|s|\leq (2N+3)\sqrt{\varrho}
\end{array}}}\left|\int_0^{s}f(y,t)dt\right|.
\end{equation}
Therefore, bearing in mind that $\mu(V)=1$, inequality~\eqref{Funzionale2F} yields
\begin{equation}\label{Funzionale2F2}
\int_V \left|F(x,v(x))\right|d\mu\leq \displaystyle\max_{{\footnotesize\begin{array}{c}
x\in V\\
|s|\leq (2N+3)\sqrt{\varrho}
\end{array}}}\left|\int_0^{s}f(x,t)dt\right|
\end{equation}
for any $v\in \overline{\mathbb{B}}_{\varrho}$.

By \eqref{Funzionale1} and \eqref{Funzionale2F2} we have that
$$\chi(\varrho)\leq \frac 1 \varrho \displaystyle\max_{{\footnotesize\begin{array}{c}
x\in V\\
|s|\leq (2N+3)\sqrt{\varrho}
\end{array}}}\left|\int_0^{s}f(x,t)dt\right|<\frac{1}{2\lambda}\,,$$
provided $\lambda$ satisfies condition \eqref{lambda}.

If the function $\alpha$ satisfies assumption~\eqref{alfa2}, we can argue in the same way, just replacing \eqref{valfa1} with the following inequality
\begin{equation}\label{valfa2}
\begin{aligned}
|v(x)| & \leq (2N+3)\|v\|\\
& \leq \frac{2N+3}{\sqrt{1-(2N+3)^2 {\displaystyle \int_V{|\alpha(x)|}d\mu}}}\|v\|_\alpha\\
& \leq \frac{2N+3}{\sqrt{1-(2N+3)^2 {\displaystyle \int_V{|\alpha(x)|}d\mu}}}\sqrt{\varrho}
\end{aligned}
\end{equation}
for any $x\in V$, thanks to \eqref{normaalfa2}.

In both cases, owing to Theorem~\ref{Pucci-Serrin+Ricceri} and taking into account \eqref{f0} and \eqref{junbounded}, we conclude that problem~\eqref{Np0}
admits at least two non-trivial weak solutions one of which lies in $\mathbb{B}_\varrho$.
The proof of Theorem~\ref{MolicaBisciPrincipal} is now complete.

\medskip

In order to conclude this subsection, in the sequel we remark some facts.
\begin{remark}\label{notaMorrey}
{\rm First of all, we notice that condition~\eqref{si2} plays a crucial role in the proof of Theorem~\ref{MolicaBisciPrincipal}, whereas the Sobolev embedding theorems are employed in the classical case of bounded domains (see, among others, \cite{ar, rabinowitz, struwe}).

Moreover, we would stress that the maximal interval of $\lambda$'s where the conclusion of Theorem~\ref{MolicaBisciPrincipal} holds true is given by $(0, \lambda^*)$, where
$$
\lambda^*  :=\frac 1 2 \sup_{\varrho>0}\frac{\varrho}{\displaystyle\max_{{\footnotesize\begin{array}{c}
x\in V\\
|s|\leq \kappa\sqrt{\varrho}
\end{array}}}\left|\int_0^{s}f(x,t)dt\right|}= \frac{1}{2\kappa^2}\sup_{z>0}\frac{z^2}{\displaystyle\max_{{\footnotesize\begin{array}{c}
x\in V\\
|s|\leq z
\end{array}}}\left|\int_0^{s}f(x,t)dt\right|}\,,
$$
with $\kappa$ as in \eqref{kappa}.

Finally, note that if we require that $\alpha\in C(V)$, we get the existence of two non-trivial strong solutions for problem~\eqref{Np0} by Remark~\ref{strong}.}
\end{remark}

\begin{remark}\label{corollarioprova2}
\rm{Note that the trivial function
is a weak solution of problem~\eqref{Np0} if and only if $f(\cdot, 0)= 0$. Hence, condition~\eqref{f0} assures that all the solutions of problem~\eqref{Np0}, if any, are non-trivial.

In the case when $f(\cdot,0) = 0$, in order to get the existence of a non-trivial solution for \eqref{Np0} (and so a multiplicity result) we need some extra assumptions on the nonlinear term $f$.  For instance, in \cite{FeMoRe} the authors assumed the following subquadratical growth condition at zero
$$
\liminf_{t\rightarrow 0^+}\frac{\displaystyle F(x,t)}{t^2}=-\infty\,\,\, \mbox{uniformly in $V$,}
$$
in addition to \eqref{f2}.}
\end{remark}

\subsection{Proof of Theorem~\ref{MolicaBisciSpecial}}\label{subsec:proofthP}
Theorem~\ref{MolicaBisciSpecial} is a direct consequence of Theorem~\ref{MolicaBisciPrincipal}. Indeed,
as it is easily seen, condition~\eqref{f1} yields
\begin{equation}\label{Funzionale2F3}
\displaystyle\max_{(x,s)\in V\times[-M_0,M_0]}\left|\int_0^{s}f(x,t)dt\right|\leq \frac{M_0^2}{2(\beta+1)(2N+3)^2}.
\end{equation}

Thus, by the fact that $\beta>0$ and \eqref{Funzionale2F3} holds, we get that
$$
1<\beta+1\leq \frac{M_0^2}{2(2N+3)^2\displaystyle\max_{(x,s)\in V\times[-M_0,M_0]}\left|\int_0^{s}f(x,t)dt\right|}\leq \lambda^*\,,
$$
thanks to Remark~\ref{notaMorrey} and the fact that $\alpha$ satisfies \eqref{alfa1}.

Then, applying Theorem~\ref{MolicaBisciPrincipal} with $\lambda=1$ we obtain that problem~\eqref{Np}
admits at least two non-trivial weak solutions one of which lies in $\mathbb{B}_{M_0^2/(2N+3)^2}$. Finally, by the regularity assumptions on the nonlinear term $f$ and the weight $\alpha$, Remark~\ref{strong} ensures that every weak solution of problem~\eqref{Np} is also strong and this concludes the proof of Theorem~\ref{MolicaBisciSpecial}.

\end{document}